\def\anonymous{0}
\newcommand{\bbR}{\mathbb{R}}
\newcommand{\bbM}{\mathbb{M}}
\newcommand{\cG}{\mathcal{G}}
\newcommand{\cH}{\mathcal{H}}
\newcommand{\dif}{\textrm{d}}
\DeclareMathOperator{\E}{E}
\DeclareMathOperator{\SE}{SE}
\DeclareMathOperator{\SO}{SO}
\let\act\vartriangleright
\DeclareMathOperator{\stab}{stab}
\begin{document}

\title{Roto-Translation Invariant Metrics on Position-Orientation Space}

\titlerunning{\(\SE(3)\) Invariant Metrics on \(\bbM_3\)}

{\if\anonymous0
    \author{
        Gijs Bellaard\and
        Bart M. N. Smets
    }
    \authorrunning{G. Bellaard et al.}
    \institute{
        CASA \& EAISI, Eindhoven University of Technology \\
        \email{
            \{g.bellaard,b.m.n.smets\}@tue.nl
        }
    }
\else
    \author{Anonymous Author(s)}
    \authorrunning{Anonymous Author(s)}
    \institute{Anonymous Institute(s)}
\fi}

\maketitle 


\begin{abstract}

Riemannian metrics on the position-orientation space \(\bbM_3 := \bbR^3 \times S^2\) that are roto-translation group \(\SE(3)\) invariant play a key role in image analysis tasks like enhancement, denoising, and segmentation. 
These metrics enable roto-translation equivariant algorithms, with the associated Riemannian distance often used in implementation.

However, computing the Riemannian distance is costly, which makes it unsuitable in situations where constant recomputation is needed. 
We propose the \emph{mav} (minimal angular velocity) \emph{distance}, defined as the Riemannian length of a geometrically meaningful curve, as a practical alternative.

We see an application of the \emph{mav distance} in geometric deep learning. 
Namely, neural networks architectures such as PONITA, relies on geometric invariants to create their roto-translation equivariant model.
The \emph{mav distance} offers a \emph{trainable} invariant, with the parameters that determine the Riemannian metric acting as learnable weights. 

In this paper we:
1) classify and parametrize all \(\SE(3)\)-invariant metrics on \(\bbM_3\),
2) describes how to efficiently calculate the \emph{mav distance},
and 3) investigate if including the \emph{mav distance} within PONITA can positively impact its accuracy in predicting molecular properties.

\keywords{Position-Orientation Space \and Roto-translation Group \and Riemannian Geometry \and Invariant \and Equivariant \and Machine Learning.}

\end{abstract}

\section{Introduction}

Riemannian metrics \(\cG\) on position-orientation space \(\bbM_3 := \bbR^3 \times S^2\) that are roto-translation group \(\SE(3) := \bbR^3 \rtimes \text{SO}(3) \) invariant appear in various works related to medical image analysis, such as enhancement, denoising, and segmentation \cite{duits2011left,portegies2015new,smets2023pde,smets2021total}.
Such invariant metrics \(\cG\) are used to define (nonlinear) PDEs on \(\bbM_3\) which process the data in a roto-translation equivariant manner.

As evidenced in \cite{duits2011left,portegies2015new,smets2023pde}, we are in practice interested in the induced Riemannian distance \(d_\cG : \bbM_3 \times \bbM_3 \to \bbR_{\geq 0}\) defined by \(d_{\cG}(p_1,p_2) := \inf_{\gamma \in \Gamma} L_{\cG}(\gamma)\)
where \(\Gamma\) is the set of all (piecewise) continuously differentiable curves \(\gamma : [0,1] \to \bbM_3\) between \(\gamma(0)=p_1\) and \(\gamma(1)=p_2\), and \(L_{\cG}(\gamma) := \int_0^1 \|\dot \gamma\|_{\cG} \dif t\) is the length of the curve \(\gamma\).
This is because \emph{if} one has access to the Riemannian distance, it can be used to efficiently solve the PDEs numerically \cite[eq.44-46]{smets2023pde}.

However, in general, the exact Riemannian distance on \(\bbM_3\) is expensive to determine \cite{duits2018optimal}.
In \cite{portegies2015new} it is therefore suggested to instead use, what we will call, the \emph{mav distance} \(\mu_\cG(p_1,p_2) = L_{\cG}(\eta)\),
where \(\eta : [0,1] \to \bbM_3\) is the curve \(\eta(t) = e^{M(p_1,p_2) t} \act p_1\), and \(M(p_1, p_2) \in \mathfrak{se}(3)\) the unique generator between \(p_1\) and \(e^{M(p_1,p_2)} \act p_1 = p_2\) with \underline{m}inimal \underline{a}ngular \underline{v}elocity (mav).
The action \(\act\) of \(\SE(3)\) on \(\bbM_3\), and the angular velocity of a roto-translation generator will be defined in \Cref{sec:prelim}.

The mav distance is favored over the Riemannian distance as it is more tractable: the calculation of the length of \(\eta\) is straightforward. 
Namely, one can show that \(L_\cG(\eta) = \| M(p_1, p_2) \act p_1 \|_\cG\).
The action \(\act\) of \(\mathfrak{se}(3)\) on \(\bbM_3\) will be defined in \Cref{sec:prelim}.

%

We see an application of the mav distance \(\mu_\cG\) in equivariant machine learning, specifically  in the PONITA architecture introduced by Bekkers et al. in \cite{bekkers2024fast}.
PONITA is a roto-translation group \(\SE(3)\) equivariant machine learning architecture that utilizes scalar fields on position-orientation space \(\bbM_3\).
This architecture achieves state-of-the-art results on two molecular datasets: rMD17 \cite{christensen2020role}, where the task is to predict molecular dynamics, and QM9 
\cite{ramakrishnan2014quantum,ruddigkeit2012enumeration}, where the goal is to predict chemical properties of various molecules.

As described in \cite{bekkers2024fast}, one can create \(\SE(3)\) equivariant neural networks on \(\bbM_3\) through scalar \(\SE(3)\) invariants \(\iota : \bbM_3 \times \bbM_3 \to \bbR\), that being functions with the property that \(\iota(g \act p_1, g \act p_2) = \iota(p_1, p_2)\) for all \(p_1, p_2 \in \bbM_3\) and \(g \in \SE(3)\).

We recognized that the mav distance \(\mu_\cG\) can be an interesting invariant to consider within the PONITA architecture.
Namely, all invariant metrics \(\cG\) on a homogeneous space can be parametrized through a small set of real numbers -- the metric parameters \(w_i\) -- and \(\bbM_3\) is no different.
This means that the mav distance \(\mu_{\cG}\) is in fact a \emph{trainable} invariant with the metric parameters \(w_i\) acting as learnable weights.

We therefore want to investigate if the mav distance \(\mu_\cG(p_1,p_2)\) can positively impact the accuracy of the PONITA architecture.
However, before we can do this we need to
1) classify and parametrize all \(\SE(3)\) invariant metrics \(\cG\) on \(\bbM_3\), and 2) explicitly construct the mav generator \(M(p_1,p_2)\).

\subsection{Contributions}

We classify and parametrize all \(\SE(3)\) invariant metrics \(\cG\) on \(\bbM_3\).
We explicitly construct the mav generator \(M(p_1, p_2) \in \mathfrak{se}(3)\) between two position-orientations \(p_1, p_2 \in \bbM_3\).
With these two results, we can employ the mav distance \(\mu_\cG\) as a trainable invariant in the PONITA architecture.
We examine how incorporating the mav distance \(\mu_\cG(p_1, p_2)\) into the PONITA architecture impacts its accuracy.

\subsection{Outline}

In \Cref{sec:prelim} we define our main objects of study; that being the position-orientation space \(\bbM_3\) and the roto-translation group \(\SE(3)\).
In \Cref{sec:invariant_metrics} we classify and parametrize all \(\SE(3)\) invariant metrics \(\cG\) on \(\bbM_3\).
In \Cref{sec:mav} we explicitly construct the mav generator \(M(p_1, p_2)\).
In \Cref{sec:experiments} we experimentally evaluate the mav distance \(\mu_\cG\) using the PONITA architecture.
In \Cref{sec:conclusion} we conclude the paper.

\section{Preliminaries} \label{sec:prelim}

In this section we briefly define our central concepts: position-orientation space \(\bbM_3\), the roto-translation group \(\SE(3)\), and how \(\SE(3)\) acts on \(\bbM_3\).

\begin{definition}[Position-Orientation Space] \label{def:pos_ori_space}
    The smooth manifold of position-orientations is
    \( \bbM_3 = \{ (x,n) \in \bbR^3 \times \bbR^3 \mid \|n\|=1 \}\).
    The tangent space at a point \(p = (x,n) \in \bbM_3\) is \( T_p \bbM_3 = \{ (\dot x, \dot n) \in \bbR^3 \times \bbR^3 \mid \dot n \cdot n = 0 \}\).
\end{definition}

\begin{definition}[Roto-translation Group]
    The roto-translation group is
    \(\SE(3) = \{ (t, R) \in \bbR^3 \times \bbR^{3 \times 3} \mid R^\top R = I, \det R = 1\} \).
    The group product is
    \( (t_2, R_2) \cdot (t_1, R_1) = (t_2 + R_2 t_1, R_2 R_1) \).
    The identity element is \(e = (I,0)\).
    Its Lie algebra of generators is \(\mathfrak{se}(3) = \{ (v, \omega) \in \bbR^3 \times \bbR^{3 \times 3} \mid \omega^\top + \omega = 0 \}\).
\end{definition}

For a generator \((v,\omega) \in \mathfrak{se}(3)\) we call \(v\) the \emph{translation velocity vector} and \(\omega\) the \emph{angular velocity tensor}.
The \textit{angular velocity} \(\|\omega\|\) of an angular velocity tensor \(\omega\) is defined as
\begin{equation} \label{eq:angular_velocity}
    \|\omega\| := \sqrt{\omega_1^2 + \omega_2^2 + \omega_3^2} \quad \text{where} \quad  \omega = \left( \begin{matrix}
        0 & -\omega_3 & \omega_2 \\
        \omega_3 & 0 & -\omega_1 \\
        -\omega_2 & \omega_1 & 0
    \end{matrix} \right).
\end{equation}

We define the action \(\act : \SE(3) \times \bbM_3 \to \bbM_3\) of the roto-translation group on position-orientation space by
\begin{equation} \label{eq:se3_act_m3}
    (t, R) \act (x, n) = (t + R x, R n),
\end{equation}
where \((t, R) \in \SE(3)\) and \((x,n) \in \bbM_3\).
This action naturally extends to an action \(\act : \SE(3) \times T\bbM_3 \to T\bbM_3\) (via the pushforward):
\begin{equation} \label{eq:se3_act_tm3}
        (t, R) \act (\dot x, \dot n) = (R\dot x, R\dot n),
\end{equation}
where \((\dot x, \dot n) \in T_{(x,n)} \bbM_3\).
The action also induces the action \(\act : \mathfrak{se}(3) \times \bbM_3 \to T\bbM_3\) given by 
\begin{equation}
    (v, \omega) \act (x, n) = (v + \omega x, \omega n),
\end{equation}
where \((v, \omega) \in \mathfrak{se}(3)\), and \((v + \omega x, \omega n) \in T_{(x,n)}\bbM_3\).
From here on out we will sometimes drop the group action symbol \(\act\) for conciseness.

\section{Invariant Metrics} \label{sec:invariant_metrics}

In this section we will classify all \(\SE(3)\) invariant metrics \(\cG\) on \(\bbM_3\).
We will only state the (parallelogram law abiding) norm \(\|\cdot\|_\cG\).
One can reconstruct the corresponding inner product using the standard polarization identity.

\begin{theorem}[\(\SE(3)\) Invariant Metrics on \(\bbM_3\)] \label{res:metrics}
    Let \(p=(x,n) \in \bbM_3\) and \(\dot p = (\dot x, \dot n) \in T_p\bbM_3\).
    Every $\SE(3)$ invariant Riemannian metric tensor field \(\cG\) on \(\bbM_3\) yields a norm of the form
    \begin{equation} \label{eq:se3_metric_m3}
        \|(p,\dot p)\|_\cG^2 
        = w_1 \, |\dot x \cdot n|^2 
        + w_2 \, \|\dot x \times n\|^2 
        + w_3 \, \|\dot n \|^2 
        + 2 w_4 \, \dot x \cdot \dot n 
        + 2 w_5 \, \dot x \cdot (\dot n \times n),
    \end{equation}
    with \(w_i \in \bbR\) constants that satisfy the positivity\footnote{\(\|(p,\dot p)\|\geq0\) and \(\|(p,\dot p)\| = 0 \Rightarrow \dot p=0\).} constraints \(w_1, w_2, w_3 > 0\) and \(w_2 w_3 > w_4^2 + w_5^2\).
\end{theorem}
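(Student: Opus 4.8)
The plan is to use that \(\bbM_3\) is a homogeneous space for \(\SE(3)\). Fix the base point \(p_0 := (0, e_z)\) with \(e_z\) the third standard basis vector; by \eqref{eq:se3_act_m3} its stabilizer \(H := \stab(p_0) \leq \SE(3)\) consists precisely of the pure rotations about the \(e_z\)-axis, so \(H \cong \SO(2)\) and \(\bbM_3 \cong \SE(3)/\SO(2)\). It is a standard fact that \(\SE(3)\)-invariant Riemannian metrics on such a homogeneous space are in bijection with \(H\)-invariant (isotropy-invariant) inner products on the single tangent space \(T_{p_0}\bbM_3\): the value of the metric at any other point is the pushforward under any group element carrying \(p_0\) there, and this is well defined exactly because the chosen inner product at \(p_0\) is \(H\)-invariant. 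So the problem reduces to classifying \(H\)-invariant symmetric bilinear forms on \(T_{p_0}\bbM_3\) and then determining which are positive definite.

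Next I work out the isotropy representation. By \Cref{def:pos_ori_space}, \(T_{p_0}\bbM_3 = \{(\dot x, \dot n) : \dot n \cdot e_z = 0\}\), and by \eqref{eq:se3_act_tm3} a rotation \(R_\theta \in H\) acts by \((\dot x, \dot n)\mapsto(R_\theta \dot x, R_\theta \dot n)\). Decomposing \(\dot x = \dot x_\parallel + \dot x_3 e_z\) into its \(e_z^\perp\)-component \(\dot x_\parallel\) and its \(e_z\)-component, and viewing \(\dot x_\parallel, \dot n\) as vectors in the plane \(e_z^\perp \cong \bbR^2\), the representation splits as one trivial line (spanned by \(\dot x_3\)) plus two copies of the standard planar rotation representation of \(\SO(2)\) (carried by \(\dot x_\parallel\) and by \(\dot n\)). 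Write an invariant symmetric bilinear form \(B\) in block form against this splitting. The block on the trivial line is a single scalar \(w_1\); the blocks coupling the trivial line to either \(\bbR^2\)-summand must satisfy \((\text{row})R_\theta = (\text{row})\) for all \(\theta\), hence vanish; each of the two diagonal \(2\times 2\) blocks must commute with every \(R_\theta\) and be symmetric, hence equals a scalar multiple of the identity (\(w_2\) on the \(\dot x_\parallel\)-block, \(w_3\) on the \(\dot n\)-block); and the off-diagonal \(2\times 2\) block must commute with every \(R_\theta\), hence has the form \(w_4 I + w_5 J\) with \(J\) a fixed rotation by \(\pi/2\). This produces exactly a five-parameter family \((w_1,\dots,w_5)\in\bbR^5\).

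It remains to match this with \eqref{eq:se3_metric_m3} and to extract the constraints. First, evaluating the right-hand side of \eqref{eq:se3_metric_m3} at \(p=p_0\) (where \(n=e_z\)) gives \(w_1 \dot x_3^2 + w_2 \|\dot x_\parallel\|^2 + w_3 \|\dot n\|^2 + 2 w_4\,(\dot x_\parallel\cdot\dot n) + 2 w_5\,(\dot x_\parallel \times \dot n)\cdot e_z\), which is precisely the generic invariant form just described. Second, the right-hand side of \eqref{eq:se3_metric_m3} is manifestly \(\SE(3)\)-invariant: under \((t,R)\) it transforms by \((\dot x,\dot n)\mapsto(R\dot x, R\dot n)\), and each term is built from \(\dot x\cdot n\), \(\dot x\cdot\dot n\), \(\|\dot n\|\), \(\|\dot x\times n\|\), and \(\dot x\cdot(\dot n\times n)\), all preserved by \(R\in\SO(3)\) (orthogonality for the dot products and \(\|\dot n\|\); \(\det R = 1\) for the cross products). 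Hence \eqref{eq:se3_metric_m3} is the unique \(\SE(3)\)-invariant tensor field restricting to the given form at \(p_0\), so every invariant metric has this shape (and conversely every choice of the \(w_i\) obeying the positivity constraints yields one). Finally, positive-definiteness at \(p_0\): the \(\dot x_3\)-direction decouples and forces \(w_1 > 0\); on the complementary \(\bbR^2\oplus\bbR^2\) the Gram matrix is \(\left(\begin{smallmatrix} w_2 I & A \\ A^\tr & w_3 I\end{smallmatrix}\right)\) with \(A = w_4 I + w_5 J\), which is positive definite iff \(w_2 > 0\) and the Schur complement \(w_3 I - \tfrac1{w_2}A^\tr A\) is positive definite; since \(A^\tr A = (w_4^2 + w_5^2)I\), this is exactly \(w_2 > 0\), \(w_3 > 0\) and \(w_2 w_3 > w_4^2 + w_5^2\), as claimed.

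The step I expect to be the main obstacle is the isotropy computation, specifically the real representation theory: the planar rotation representation of \(\SO(2)\) is irreducible over \(\bbR\) but not absolutely irreducible — its commutant is \(\bbC\) rather than \(\bbR\) — so a naive application of Schur's lemma would give only a one-dimensional space of intertwiners between the two \(\bbR^2\)-copies and would miss the parameter \(w_5\). The clean fix is the elementary fact that a real \(2\times 2\) matrix commuting with every rotation \(R_\theta\) is exactly one of the form \(aI + bJ\); granted that, the rest is block bookkeeping together with the routine Schur-complement positivity calculation.
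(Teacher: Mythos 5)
Your proof is correct and follows essentially the same route as the paper: both reduce the classification to stabilizer-invariant inner products on a single tangent space (using transitivity of the action) and then verify invariance and positive-definiteness of the resulting five-parameter form. The differences are only in bookkeeping --- you decompose the isotropy representation into irreducibles and invoke the commutant \(\{aI+bJ\}\) of planar rotations plus a Schur-complement computation, whereas the paper enforces invariance under a single \(\pi/2\) rotation of the stabilizer in an adapted basis and applies Sylvester's criterion to the resulting \(5\times 5\) component matrix; both correctly retain the \(w_5\) term that a naive application of Schur's lemma over \(\bbC\) would miss.
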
 

\begin{proof}~

    \textbf{Invariance.}
    To prove that the stated metric is indeed \(\SE(3)\) invariant we must show that \(\|g \act (p, \dot p)\|_\cG = \|(p,\dot p)\|_\cG\) for all \(g \in \SE(3)\) and \((p, \dot p) \in T\bbM_3\).
    This is quickly verified using \eqref{eq:se3_act_m3} and \eqref{eq:se3_act_tm3} and basic properties of the inner and cross product.

    \textbf{Classification.}
    Consider an arbitrary \(\SE(3)\) invariant metric \(\cH\). 
    To prove that \(\cH\) is of the form \eqref{eq:se3_metric_m3} it suffices to show that \(\cH_{p}\) coincides with \eqref{eq:se3_metric_m3} at any position-orientation \(p=(x,n)\).
    That the \(w_i\) have to be constants then follows from the invariance together with the fact that \(\SE(3)\) acts transitively on \(\bbM_3\).
    
    Let \(e_i\) be any orthonormal basis of \(\bbR^3\) with \(e_1 = n\).
    Define the basis \(f_1 = (e_1,0)\), \(f_2 = (e_2,0)\), \(f_3 = (e_3,0)\), \(f_4 = (0,e_2)\), \(f_5 = (0,e_3)\) for \(T_{p}\bbM_3\).
    One can check that this is indeed a basis of \(T_p \bbM_3\) (\Cref{def:pos_ori_space}).
    Define the components \(h_{ij} = \cH_{p}(f_i, f_j)\).
    
    The metric \(\cH_{p}\) should be invariant under \(\stab p \subset \SE(3)\) which are rotations in the plane spanned by \(e_2\) and \(e_3\) (positioned at \(x\)).
    More specifically, consider the roto-translation \(g \in \stab p\) that rotates \(e_2 \mapsto e_3\) and \(e_3 \mapsto -e_2\).
    Enforcing invariance under this specific rotation in the \(f\) basis we find the equations \(\cH_{p}(f_i, f_j) = \cH_{p}(g f_i, g f_j)\) for all \(i, j = 1, \dots, 5\). 
    Two examples are
    \begin{equation}
    \begin{split}
        &h_{23}=\cH_{p}(f_2, f_3)=\cH_{p}(g f_2, g f_3)=\cH_{p}(f_3, -f_2)=-h_{32}, \text{ and}\\
        &h_{34}=\cH_{p}(f_3, f_4)=\cH_{p}(g f_3, g f_4)=\cH_{p}(-f_2, f_5)=-h_{25}.
    \end{split}
    \end{equation}
    These equations together with the symmetry constraints \(h_{ij}=h_{ji}\) leaves us with the following component matrix
    \begin{equation} \label{eq:metric_reduced_components}
        h_{ij} = \begin{bmatrix}
            h_{11} & 0 & 0 & 0 & 0 \\
            0 & h_{22} & 0 & h_{24} & h_{25} \\
            0 & 0 & h_{22} & -h_{25} & h_{24} \\
            0 & h_{24} & -h_{25} & h_{44} & 0 \\
            0 & h_{25} & h_{24} & 0 & h_{44} \\
        \end{bmatrix}_{ij}
        = \begin{bmatrix}
            w_1 & 0 & 0 & 0 & 0 \\
            0 & w_2 & 0 & w_4 & w_5 \\
            0 & 0 & w_2 & -w_5 & w_4 \\
            0 & w_4 & -w_5 & w_3 & 0 \\
            0 & w_5 & w_4 & 0 & w_3 \\
        \end{bmatrix}_{ij},
    \end{equation}
    where we relabeled \(h_{11}, h_{22}, h_{44}, h_{24}, h_{25} \mapsto w_1, w_2, w_3, w_4, w_5\).

    Consider a tangent vector \(\dot p = (\dot x, \dot n) = c^i f_i\), that is \(\dot x = c^1 e_1 + c^2 e_2 + c^3 e_3\) and \(\dot n = c^4 e_2 + c^5 e_3\), and we remind the reader that \(n = e_1\).
    As the final step, we can check that \(\|(p, \dot p)\|_\cH^2\) is indeed of the form \eqref{eq:se3_metric_m3}:
    
    \begin{equation}
    \begin{split}
        \|(p, \dot p)\|_\cH^2
        = h_{ij} c^i c^j
        &= w_1 c^1 c^1 
         + w_2(c^2 c^2 + c^3 c^3) 
         + w_3(c^4 c^4 + c^5 c^5) \\
        &+ 2 w_4(c^2 c^4 + c^3 c^5)
         + 2 w_5(c^2 c^5 - c^3 c^4) \\
        &= w_1 ((c^1 e_1 + c^2 e_2 + c^3 e_3) \cdot e_1)^2 \\
        &+ w_2 \|(c^1 e_1 + c^2 e_2 + c^3 e_3) \times e_1\|^2 \\
        &+ w_3 \|c^4 e_2 + c^5 e_3\|^2 \\
        &+ 2 w_4 (c^1 e_1 + c^2 e_2 + c^3 e_3) \cdot (c^4 e_2 + c^5 e_3) \\
        &+ 2 w_5 (c^1 e_1 + c^2 e_2 + c^3 e_3) \cdot ((c^4 e_2 + c^5 e_3) \times e_1) \\
        &= \|(p, \dot p)\|_\cG^2.
    \end{split}
    \end{equation}

    \textbf{Positivity.}
    To prove the positivity constraints we will apply Sylvester's criterion to the matrix \eqref{eq:metric_reduced_components} that encodes the metric.
    The determinants of the upper-left minors are, respectively, \(w_1\), \(w_1 w_2\), \(w_1 w_2^2\), \(w_1 w_2 (w_2 w_3 - w_4^2 - w_5^2)\), and \(w_1 (w_2 w_3 - w_4^2 - w_5^2)^2\).
    Requiring all of these determinants to be positive and simplifying the resulting system of inequalities yields \(w_1,w_2,w_3 > 0\) and \(w_2 w_3 > w_4^2 + w_5^2\).
\end{proof}

\begin{remark}
    This result is in contradiction with \cite[Prop.14]{smets2023pde} where it is claimed that every invariant metric is of the form 
    \(\|(p,\dot p)\|^2 
    = w_1 \, |\dot x \cdot n|^2 
    + w_2 \, \|\dot x \times n\|^2 
    + w_3 \, \|\dot n \|^2.\)
    These are only the invariant metrics that are diagonal with respect to the \(f_i\) basis.
\end{remark}

\begin{remark}
    The positivity constraint \(w_2 w_3 > w_4^2 + w_5^2\) seems to be related to \(\|\dot{x} \times n\|^2 \|\dot{n} \|^2 = (\dot{x} \cdot \dot{n})^2 + (\dot{x} \cdot (\dot{n} \times n))^2\) but the exact relation is unclear to the authors.
\end{remark}

\begin{remark}
    In a similar fashion one can show that every metric that is invariant under the rigid transformation group $\E(3) = \bbR^3 \rtimes \text{O}(3)$ is of the form
    \(
        \|(p,\dot p)\|_\cG^2 
        = w_1 \, |\dot x \cdot n|^2 
        + w_2 \, \|\dot x \times n\|^2 
        + w_3 \, \|\dot n \|^2 
        + 2 w_4 \, \dot x \cdot \dot n,
    \)
    which can be obtained from \eqref{eq:se3_metric_m3} by setting \(w_5 = 0\).
    In fact, this result generalizes to \(\E(d)\) invariant metrics on \(\bbM_d\) for \emph{all} \(d \geq 2\) by replacing \(\|\dot x \times n\|\) with \(\|\dot x \wedge n \|\).
\end{remark}







\section{Mav Generator} \label{sec:mav}

In this section we describe how one can obtain the minimal angular velocity (mav) generator \(M(p_1, p_2) \in \mathfrak{se}(3)\) between two position-orientations \(p_1, p_2 \in \bbM_3\).

We will proceed as follows.
First, we start by constructing the planar roto-translation \(S = \exp(M) \in \SE(3)\), that being the exponential of the mav generator.
Second, we rewrite the planar roto-translation \(S\) as a screw displacement.
Third, we find a generator of the obtained screw displacement, this being the mav generator.
We take this approach because identifying a generator of a screw displacement is more straightforward.

Even though the planar roto-translation was already described in \cite{portegies2015new}, we provide a new geometric perspective and  construction, combined with a way to efficiently calculate the corresponding mav generator.

\begin{definition}[Planar Roto-Translation \& Mav Generator] \label{def:mav}
    Let \(p_1=(x_1,n_1)\), \(p_2=(x_2,n_2) \in \bbM_3\) be two position-orientations.
    
    The \emph{planar roto-translation} \(S(p_1,p_2) \in \SE(3)\) from \(p_1\) to \(S \act p_1 = p_2\) is the unique roto-translation with its rotation being purely in the plane made by the orientations \(n_1\) and \(n_2\).
    
    The \emph{mav generator} \(M(p_1,p_2) \in \mathfrak{se}(3)\) is the unique generator of the planar roto-translation, that is \(\exp M = S\), with minimal angular velocity \eqref{eq:angular_velocity}.
\end{definition}

Explicitly constructing the planar roto-translation \(S\) can be done as follows.
Let \(N = \text{span} \{ n_1, n_2 \}\) be the rotation plane, 
\(\theta = \arccos(n_1 \cdot n_2) \in [0, \pi]\) the rotation angle, 
\(L = (n_2 n_1^\top - n_1 n_2^\top) / \sin(\theta) \) the unit radian rotation generator in \(N\), 
\( \omega = \theta L\) the rotation generator, 
and define \(R = \exp \omega\).
Using Rodrigues' rotation formula we can simplify the exponential to \(R = \exp \omega = I + \sin(\theta) L + (1 - \cos(\theta)) L^2\).
The planar roto-translation \(S\) is then \(S : x \mapsto R (x - x_1) + x_2\).

We can also explicitly construct the mav generator \(M\).
To find it we will write the planar roto-translation as a \emph{screw displacement}.

\begin{definition}[Screw Displacement]
    A \emph{screw displacement} is a roto-trans\-lation in the form of $x \mapsto c + R (x - c) + t$ with \(R t = t\), that is the translational part \(t\) is orthogonal to the plane of rotation.
    We call \(c\) the \emph{center of rotation} of the screw displacement.
\end{definition}

Every roto-translation can be written as a screw displacement, this is known as Chasles' theorem \cite{heard2005rigid}.
More specifically, we can write the planar roto-translation as a screw displacement as follows.
Let \(\vec{x} = x_2 - x_1\) be the vector from \(x_1\) to \(x_2\), \(x_m = (x_1 + x_2)/2\) the midpoint, and decompose \(\vec{x} = \vec{x}_\parallel + \vec{x}_\perp\) into a parallel and perpendicular part w.r.t. the rotation plane \(N\).
Define \(c = x_m + \tfrac{1}{2} \cot \tfrac{\theta}{2} \, L \vec{x}_\parallel\).
The \emph{planar screw displacement} is then \(x \mapsto c + R(x - c) + \vec{x}_\perp\).
The situation is visualized in \Cref{fig:mav_screw_displacement}, and should explain geometrically how we obtained the formula for the center of rotation \(c\).

\begin{figure}
    \centering
    \begin{subfigure}{0.3\textwidth}
        \newcommand{\scale}{0.5}
        \newcommand{\viewa}{15}%
        \newcommand{\viewb}{38}%
        \newcommand{\pointsize}{2pt}%
        \begin{tikzpicture}[scale=\scale, 3d view={\viewa}{\viewb}]
    \definecolor{x1color}{rgb}{0.25,0.75,0.25}%
    \definecolor{x2color}{rgb}{0.75,0.25,0.25}%
    \definecolor{xmcolor}{rgb}{0.25,0.25,0.75}%

    \newcommand{\rad}{2.92}
    
    \coordinate (x1) at (0.0, 0.0, 0.0);
    \coordinate (x2) at (2.0, 1.0, 2.0);
    \coordinate (n1) at (1.0, 0.0, 0.0);
    \coordinate (n2) at (0.7, 0.7, 0.0);

    \coordinate (n3) at (0.0, 0.0, 1.0);
    \coordinate (xm) at ($(x1)!0.5!(x2)$);
    \coordinate (xd) at ($(x2)-(x1)$);
    \coordinate (xp) at (-1.0,2.0,0.0);
    
    \coordinate (cm) at ($(xm) + 2.4142*0.5*(xp) $);
    \coordinate (cb) at ($(cm) - (0,0,1)$);
    \coordinate (ct) at ($(cm) + (0,0,1)$);

    \coordinate (x1t) at ($(x1) + (0,0,2)$);
    \coordinate (x1m) at ($(x1) + (0,0,1)$);
    \coordinate (x2b) at ($(x2) - (0,0,2)$);
    \coordinate (x2m) at ($(x2) - (0,0,1)$);
    
    
    \draw[thick,->] (x1) -- ++(n1);
    \draw[thick,->] (x2) -- ++(n2);

    \draw[dashed] (cb) -- (x1);
    \draw[dashed] (cb) -- (x2b);
    
    \draw[dashed] (ct) -- (x2);
    \draw[dashed] (ct) -- (x1t);
    
    \draw[dashed] (cm) -- (xm);
    \draw[dashed] (cm) -- (x2m);
    \draw[dashed] (cm) -- (x1m);
    \draw[dashed] (x1m) -- (x2m);
    \draw[dashed] (x1t) -- (x2);
    \draw[dashed] (x2b) -- (x1);
    
    \draw[thick] (cb) -- (ct);
    \draw[dashed] (x2) -- (x2b);
    \draw[dashed] (x1) -- (x1t);

    \draw[] (cb) circle (\rad);
    \draw[] (cm) circle (\rad);
    \draw[] (ct) circle (\rad);

    \filldraw[x1color] (x1) circle (\pointsize) node [x1color, anchor=south east] {$x_1$};
    \filldraw[x2color] (x2) circle (\pointsize) node [x2color, anchor=south] {$x_2$};
    \filldraw[xmcolor] (xm) circle (\pointsize) node [xmcolor, anchor=south east] {$x_m$};
    \filldraw[black] (cm) circle (\pointsize)  node [anchor=south west] {$c$};

    \draw (x1) ++ (n1) node [anchor=north west] {$n_1$};
    \draw (x2) ++ (n2) coordinate (bla2) node [anchor=north west] {$n_2$};


\end{tikzpicture}%
        \caption{Overview.}
    \end{subfigure}%
    \begin{subfigure}{0.3\textwidth}
        \newcommand{\scale}{0.5}
        \newcommand{\viewa}{0}%
        \newcommand{\viewb}{90}%
        \newcommand{\pointsize}{2pt}%
        \begin{tikzpicture}[scale=\scale, 3d view={\viewa}{\viewb}]
    \definecolor{x1color}{rgb}{0.25,0.75,0.25}%
    \definecolor{x2color}{rgb}{0.75,0.25,0.25}%
    \definecolor{xmcolor}{rgb}{0.25,0.25,0.75}%

    \newcommand{\rad}{2.92}
    
    \coordinate (x1) at (0.0, 0.0, 0.0);
    \coordinate (x2) at (2.0, 1.0, 2.0);
    \coordinate (n1) at (1.0, 0.0, 0.0);
    \coordinate (n2) at (0.7, 0.7, 0.0);

    \coordinate (n3) at (0.0, 0.0, 1.0);
    \coordinate (xm) at ($(x1)!0.5!(x2)$);
    \coordinate (xd) at ($(x2)-(x1)$);
    \coordinate (xp) at (-1.0,2.0,0.0);
    
    \coordinate (cm) at ($(xm) + 2.4142*0.5*(xp) $);
    \coordinate (cb) at ($(cm) - (0,0,1)$);
    \coordinate (ct) at ($(cm) + (0,0,1)$);

    \coordinate (x1t) at ($(x1) + (0,0,2)$);
    \coordinate (x1m) at ($(x1) + (0,0,1)$);
    \coordinate (x2b) at ($(x2) - (0,0,2)$);
    \coordinate (x2m) at ($(x2) - (0,0,1)$);
    
    
    \draw[thick,->] (x1) -- ++(n1);
    \draw[thick,->] (x2) -- ++(n2);

    \draw[dashed] (cb) -- (x1);
    \draw[dashed] (cb) -- (x2b);
    
    \draw[dashed] (ct) -- (x2);
    \draw[dashed] (ct) -- (x1t);
    
    \draw[dashed] (cm) -- (xm);
    \draw[dashed] (cm) -- (x2m);
    \draw[dashed] (cm) -- (x1m);
    \draw[dashed] (x1m) -- (x2m);
    \draw[dashed] (x1t) -- (x2);
    \draw[dashed] (x2b) -- (x1);
    
    \draw[thick] (cb) -- (ct);
    \draw[dashed] (x2) -- (x2b);
    \draw[dashed] (x1) -- (x1t);

    \draw[] (cb) circle (\rad);
    \draw[] (cm) circle (\rad);
    \draw[] (ct) circle (\rad);

    \filldraw[x1color] (x1) circle (\pointsize) node [x1color, anchor=south east] {$x_1$};
    \filldraw[x2color] (x2) circle (\pointsize) node [x2color, anchor=south] {$x_2$};
    \filldraw[xmcolor] (xm) circle (\pointsize) node [xmcolor, anchor=south east] {$x_m$};
    \filldraw[black] (cm) circle (\pointsize)  node [anchor=south west] {$c$};

    \draw (x1) ++ (n1) node [anchor=north west] {$n_1$};
    \draw (x2) ++ (n2) coordinate (bla2) node [anchor=north west] {$n_2$};


\end{tikzpicture}%
        \caption{Top view.}
    \end{subfigure}
    \begin{subfigure}{0.3\textwidth}
        \newcommand{\scale}{0.6}
        \newcommand{\viewa}{26.5}%
        \newcommand{\viewb}{0}%
        \newcommand{\pointsize}{2pt}%
        \begin{tikzpicture}[scale=\scale, 3d view={\viewa}{\viewb}]
    \definecolor{x1color}{rgb}{0.25,0.75,0.25}%
    \definecolor{x2color}{rgb}{0.75,0.25,0.25}%
    \definecolor{xmcolor}{rgb}{0.25,0.25,0.75}%

    \newcommand{\rad}{2.92}
    
    \coordinate (x1) at (0.0, 0.0, 0.0);
    \coordinate (x2) at (2.0, 1.0, 2.0);
    \coordinate (n1) at (1.0, 0.0, 0.0);
    \coordinate (n2) at (0.7, 0.7, 0.0);

    \coordinate (n3) at (0.0, 0.0, 1.0);
    \coordinate (xm) at ($(x1)!0.5!(x2)$);
    \coordinate (xd) at ($(x2)-(x1)$);
    \coordinate (xp) at (-1.0,2.0,0.0);
    
    \coordinate (cm) at ($(xm) + 2.4142*0.5*(xp) $);
    \coordinate (cb) at ($(cm) - (0,0,1)$);
    \coordinate (ct) at ($(cm) + (0,0,1)$);

    \coordinate (x1t) at ($(x1) + (0,0,2)$);
    \coordinate (x1m) at ($(x1) + (0,0,1)$);
    \coordinate (x2b) at ($(x2) - (0,0,2)$);
    \coordinate (x2m) at ($(x2) - (0,0,1)$);
    
    
    \draw[thick,->] (x1) -- ++(n1);
    \draw[thick,->] (x2) -- ++(n2);

    \draw[dashed] (cb) -- (x1);
    \draw[dashed] (cb) -- (x2b);
    
    \draw[dashed] (ct) -- (x2);
    \draw[dashed] (ct) -- (x1t);
    
    \draw[dashed] (cm) -- (xm);
    \draw[dashed] (cm) -- (x2m);
    \draw[dashed] (cm) -- (x1m);
    \draw[dashed] (x1m) -- (x2m);
    \draw[dashed] (x1t) -- (x2);
    \draw[dashed] (x2b) -- (x1);
    
    \draw[thick] (cb) -- (ct);
    \draw[dashed] (x2) -- (x2b);
    \draw[dashed] (x1) -- (x1t);

    \draw[] (cb) circle (\rad);
    \draw[] (cm) circle (\rad);
    \draw[] (ct) circle (\rad);

    \filldraw[x1color] (x1) circle (\pointsize) node [x1color, anchor=south east] {$x_1$};
    \filldraw[x2color] (x2) circle (\pointsize) node [x2color, anchor=south] {$x_2$};
    \filldraw[xmcolor] (xm) circle (\pointsize) node [xmcolor, anchor=south east] {$x_m$};
    \filldraw[black] (cm) circle (\pointsize)  node [anchor=south west] {$c$};

    \draw (x1) ++ (n1) node [anchor=north west] {$n_1$};
    \draw (x2) ++ (n2) coordinate (bla2) node [anchor=north west] {$n_2$};


\end{tikzpicture}%
        \caption{Front view.}
    \end{subfigure}
    \caption{Diagram of the center of rotation \(c\) of the planar screw displacement between two position-orientations \(p_1 = (x_1,n_1)\), \(p_2 = (x_2, n_2) \in \bbM_3\). }
    \label{fig:mav_screw_displacement}
\end{figure}
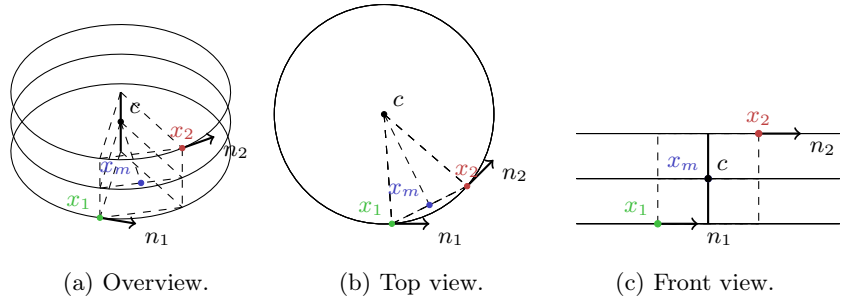

Finding a generator for a screw displacement $x \mapsto c + R (x - c) + t$ is straightforward.
Namely, a generator is $x \mapsto \omega (x - c) + v$, where $\omega$ is a generator of $R$ in $\SO(3)$, and \(v = t\) is the generator of \(t\) in \(\bbR^3\).
Particularly, the mav generator \(s\) for the planar screw displacement \(S\) is \(x \mapsto \omega(x - c) + \vec{x}_\perp\) where \(\omega = \theta L\).

So, all in all, we have the following explicit form for the mav generator.

\begin{proposition}[Explicit Mav Generator] \label{res:mav_generator}
    The \emph{mav generator} \(M(p_1,p_2) \in \mathfrak{se}(3)\) between two position-orientations \(p_1=(x_1,n_1)\), \(p_2=(x_2,n_2) \in \bbM_3\) is
    \begin{equation}
        M = (- \omega c + v, \omega),
    \end{equation}
    where \(\omega = \theta L\), \(c = x_m + \tfrac{1}{2} \cot \tfrac{\theta}{2} \, L \vec{x}_\parallel\), \(v = \vec{x}_\perp\), \(\theta = \arccos(n_1 \cdot n_2)\), \(L = (n_2 n_1^\top - n_1 n_2^\top) / \sin(\theta)\), \(x_m = (x_1 + x_2)/2\), and \(\vec{x} = x_2 - x_1\).
\end{proposition}

\section{Experiments} \label{sec:experiments}

The standard PONITA architecture builds upon 3 scalar \(\SE(3)\) invariants (collected in a column vector), as designed in Bekkers et al. \cite[eq.9]{bekkers2024fast}:
\begin{equation} \label{eq:bekkers_invariants}
\begin{split}
    \iota(p_1, p_2) &= \left( \begin{matrix}
        (x_2 - x_1) \cdot n_1\\
        \|(x_2 - x_1) - \iota_1(p_1, p_2) n_1 \|\\
        \arccos(n_1 \cdot n_2)
    \end{matrix} \right), \\
\end{split}
\end{equation}
where \(p_1 = (x_1, n_1)\), \(p_2 = (x_2, n_2) \in \bbM_3\).
We compare this architecture against one where these 3 invariants are \emph{completely} replaced by the mav distance \(\mu_\cG\):
\begin{equation} \label{eq:mav_distance_experi}
    \mu_\cG(p_1,p_2) = \| M(p_1, p_2) \act p_1 \|_\cG,
\end{equation}
where \(\cG\) is the \(\SE(3)\) invariant metric on \(\bbM_3\) determined by the learnable metric parameters \(w_1, \dots, w_5\) as described in \Cref{res:metrics}, and \(M(p_1,p_2)\) the mav generator as described in \Cref{res:mav_generator}.
We investigate if this replacement has a positive impact on the accuracy of PONITA.

Through a reparameterization of the metric parameters \(w_1, \dots, w_5\) one can enforce the (semi-)positivity constraints \(w_1, w_2, w_3 \geq 0\) and \(w_2 w_3 \geq w_4^2 + w_5^2\).
For example, one can define \(w_1 = a_1^2\), \(w_2 = a_2^2\), \(w_3 = a_3^2\), \(w_4 = a_4 \delta \), and \(w_5 = a_5 \delta \), where \(\delta = 2 a_2 a_3 / (1 + a_4^2 + a_5^2)\).
However, in practice, enforcing these constraints yielded no noticeable benefits. 
We therefore left the metric parameters unconstrained, allowing the mav ``distance'' to take on negative values.
This modification preserves the \(\SE(3)\) invariance of the mav distance.

We performed some experiments on the QM9 dataset \cite{ramakrishnan2014quantum,ruddigkeit2012enumeration}, specifically predicting properties of various molecules (134k stable small organic molecules).
We choose to discretize with \(16\) orientations, use \(6\) layers, \(128\) dimensional features, and train for \(800\) epochs.
All other model settings and hyperparameters are kept the same as in \cite[App.E.2]{bekkers2024fast}.
We report the mean absolute error on the test set with the model that did best on the validation set during training.
The results can be found in \Cref{tab:ponita_qm9}.

\definecolor{good}{RGB}{54, 176, 60}
\definecolor{bad}{RGB}{176, 74, 110}

We see that the use of the mav distance within PONITA has a positive impact on accuracy on 7 of the 12 targets we experimented on, with an average improvement of \(\color{good}-4.0\%\).

\begin{table}
    \centering
    \def\arraystretch{1.2}
    \setlength{\tabcolsep}{0.5em} 
    \begin{tabular}{ll|rrr}
        Target & Unit & Bekkers et al. 
        \eqref{eq:bekkers_invariants} & Mav Distance \eqref{eq:mav_distance_experi} & Difference \% \\
        \hline
        \(\mu\) & D & 0.0195 & 0.0181 & \(\color{good}-07.2\)\\
        \(\alpha\) & \(a_0^3\) & 0.0556 & 0.0540 & \(\color{good}-02.9 \)\\
        \(\varepsilon_\text{homo}\) & eV & 0.0225 & 0.0229 & \(\color{bad}+01.8 \)\\
        \(\varepsilon_\text{lumo}\) & eV & 0.0205 & 0.0207 & \(\color{bad}+01.0 \)\\
        \(\Delta \varepsilon\) & eV & 0.0414 & 0.0431 & \(\color{bad}+04.0 \)\\
        \(\langle R^2 \rangle\) & \(a_0^2\) & 0.4160 & 0.4942 & \(\color{bad}+18.8\)\\
        ZPVE & meV & 1.5647 & 1.5613 & \(\color{good}-00.2\)\\
        \(U_0\) & eV & 0.9920 & 0.7047 &  \(\color{good}-28.9\)\\
        \(U\) & eV & 1.3593 & 1.0947 & \(\color{good}-19.5\)\\
        \(H\) & eV & 1.0204 & 1.0856 & \(\color{bad}+06.4\)\\
        \(G\) & eV & 1.1856 & 0.9691 & \(\color{good}-18.3\)\\
        \(c_v\) & \(\frac{\mathrm{cal}}{\mathrm{mol} \, \mathrm{K}}\) & 0.0291 & 0.0283 & \(\color{good}-02.8\)\\
    \end{tabular}
    \caption{
        PONITA trained to predict chemical properties of various molecules (QM9 dataset).
        Mean absolute error on the test set is reported (lower is better). 
    }
    \label{tab:ponita_qm9}
\end{table}

\section{Conclusion} \label{sec:conclusion}

In \Cref{sec:invariant_metrics} we classified and parametrized all \(\SE(3)\) invariant Riemannian metric tensor fields \(\cG\) on \(\bbM_3\).
In \Cref{sec:mav} we explicitly constructed the mav generator \(M(p_1, p_2) \in \mathfrak{se}(3)\) between two position-orientations \(p_1, p_2 \in \bbM_3\).
These two results allowed us to use the mav distance \(\mu_\cG(p_1, p_2)\) as a trainable invariant in the PONITA architecture.

In \Cref{sec:experiments} we performed an experimental comparison between the mav distance \eqref{eq:mav_distance_experi} and the invariants proposed in Bekkers et al. \cite{bekkers2024fast}, as defined in \eqref{eq:bekkers_invariants}. 
We did this by training the PONITA architecture to predict chemical properties of various molecules (QM9 dataset \cite{ramakrishnan2014quantum,ruddigkeit2012enumeration}).
We observe marginal improvements in accuracy when using the mav distance.

Even though the improvements are marginal, our theoretical results can still find application in the processing of medical images, as evidenced by the works \cite{duits2011left,portegies2015new,smets2023pde,smets2021total}.

\textbf{Availability of Code.} 
All code can be found at 
{\if\anonymous0
\url{https://gitlab.com/gijsbel/ponita_invariants}.
\else
\url{https://anonymized.url}.
\fi}

{\if\anonymous0
\textbf{Acknowledgements.} 
The European Union is gratefully acknowledged for financial support through project REMODEL (Horizon Europe, MSCA-SE, 101131557 \url{https://doi.org/10.3030/101131557}).
The Dutch Research Council (NWO) is gratefully acknowledged for financial support via VIC.202.031 (\url{https://www.nwo.nl/en/projects/vic202031}).
\fi}
We thank Bekkers et al. \cite{bekkers2024fast} for their publicly available PONITA architecture \url{https://github.com/ebekkers/ponita}.

\bibliographystyle{splncs04}
\bibliography{bibliography.bib}

\begin{thebibliography}{10}
\providecommand{\url}[1]{\texttt{#1}}
\providecommand{\urlprefix}{URL }
\providecommand{\doi}[1]{https://doi.org/#1}

\bibitem{bekkers2024fast}
Bekkers, E.J., Vadgama, S., Hesselink, R., {van der Linden}, P.A., Romero, D.W.: Fast, expressive {SE}(3) equivariant networks through weight-sharing in position-orientation space. In: ICLR (2024), \url{https://openreview.net/forum?id=dPHLbUqGbr}

\bibitem{christensen2020role}
Christensen, A.S., {von Lilienfeld}, O.A.: On the role of gradients for machine learning of molecular energies and forces. Mach. Learn.: Sci. Techn.  \textbf{1}(4) (2020). \doi{10.1088/2632-2153/abba6f}

\bibitem{duits2018optimal}
Duits, R., Meesters, S.P.L., Mirebeau, J.M., Portegies, J.M.: Optimal paths for variants of the 2{D} and 3{D} {Reeds--Shepp} car. JMIV  \textbf{60}(6),  816--848 (2018), \url{https://doi.org/10.1007/s10851-018-0795-z}

\bibitem{duits2011left}
Duits, R., Franken, E.: Left-invariant diffusions on the space of positions and orientations. IJCV  \textbf{92}(3),  231--264 (2011). \doi{10.1007/s11263-010-0332-z}

\bibitem{heard2005rigid}
Heard, W.B.: Rigid Body Mechanics: Mathematics, Physics and Applications (2005). \doi{10.1002/9783527618811}

\bibitem{portegies2015new}
Portegies, J., Sanguinetti, G., Meesters, S., Duits, R.: New approximation of a scale space kernel on {SE}(3). In: SSVM. pp. 40--52 (2015). \doi{10.1007/978-3-319-18461-6_4}

\bibitem{ramakrishnan2014quantum}
Ramakrishnan, R., Dral, P.O., Rupp, M., {von Lilienfeld}, O.A.: Quantum chemistry structures and properties of 134 kilo molecules. Scientific Data  \textbf{1}(1) (2014). \doi{10.1038/sdata.2014.22}

\bibitem{ruddigkeit2012enumeration}
Ruddigkeit, L., {van Deursen}, R., Blum, L.C., Reymond, J.L.: Enumeration of 166 billion organic small molecules in the chemical universe database {GDB-17}. Journal of Chemical Information and Modeling  \textbf{52}(11),  2864--2875 (2012). \doi{10.1021/ci300415d}

\bibitem{smets2023pde}
Smets, B.M.N., Portegies, J., Bekkers, E.J., Duits, R.: {PDE}-based group equivariant convolutional neural networks. JMIV  \textbf{65}(1),  209--239 (2023). \doi{10.1007/s10851-022-01114-x}

\bibitem{smets2021total}
Smets, B.M.N., Portegies, J., St-Onge, E., Duits, R.: Total variation and mean curvature {PDE}s on the homogeneous space of positions and orientations. JMIV  \textbf{63}(2),  237--262 (2021). \doi{10.1007/s10851-020-00991-4}

\end{thebibliography}

\end{document}